\theoremstyle{plain}
\newtheorem{theorem}{Theorem}[section]
\newtheorem{corollary}[theorem]{Corollary}
\theoremstyle{definition}
\newtheorem{definition}[theorem]{Definition}
\newtheorem{example}[theorem]{Example}
\begin{document}

\title{2-Ruled hypersurfaces in Minkowski 4-space and their constructions via octonions }
\author {{Ameth NDIAYE$^{1}$}\thanks{{
 E--mail: \texttt{ameth1.ndiaye@ucad.edu.sn} (A. NDIAYE)}},\texttt{ }Zehra \"{O}zdemir$^{2}$\footnote{{
 E--mail: \texttt{zehra.ozdemir@amasya.edu.tr} (Z. \"{O}zdemir)}} \\
 \begin{small}{$^{1}$D\'epartement de Math\'ematiques, FASTEF, UCAD, Dakar, Senegal.}\end{small}\\ 
\begin{small}{$^{2}$Department of Mathematics, Arts and Science Faculty, Amasya University, 05189 Amasya, Turkey.}\end{small}\\}
\date{}
\maketitle%


\begin{abstract} 
In this paper, we define three types of 2-Ruled hypersurfaces in the Minkowski 4-space $\mathbb{E}^4_1$. We obtain Gaussian and mean curvatures of the 2-ruled hypersurfaces of type-1 and type-2, and some characterizations about its minimality. We also deal with the first Laplace-Beltrami operators of these types of 2-Ruled hypersurfaces in $\mathbb{E}^4_1$. Moreover, the importance of this paper is that the definition of these surfaces by using the octonions in $\mathbb{E}^4_1$. Thus, this is a new idea and make the paper original. We give an example of 2-ruled hypersurface constructed by octonion and we visualize the projections of the images with MAPLE program. Furthermore, the optical fiber can be defined as a one-dimensional object embedded in the 4-dimensional Minkowski space $\mathbb{E}_{1}^{4}$. Thus, as a discussion, we investigate the geometric evolution of a linearly polarized light wave along an optical fiber by means of the 2-ruled hypersurfaces in a four-dimensional Minkowski space. 
\end{abstract}
\begin{small} {\textbf{MSC:} 53A10, 53C42, 53C50, 53Z05, 53B50, 37C10, 57R25.}
\end{small}\\
\begin{small} {\textbf{Keywords:} 2-Ruled hypersurfaces, curvature, Ruled surfaces, Vector fields, Electromagnetic theory, quaternion algebra, octonion algebra.} 
\end{small}\\
\maketitle

\section{Introduction}
The study of submanifolds of a given ambiant space is a naturel interesting problem which enriches our knowledge and understanding of the geometry of the space itself, see \cite{Berger, Carmo}. The theory of ruled surfaces in $\mathbb{R}^3$ is a classical subject in diffrential geometry and ruled hypersurfaces in higher dimensions have also been studied by many authors. For Ruled surfaces and their study one can see \cite{Dillen, Divjak, Flory, Guler}.\\
A 2-ruled hypersurface in $\mathbb{R}^4$ is a one-parameter family of planes in $\mathbb{R}^4$. This is a generalization of ruled surfaces in $\mathbb{R}^3$.\\
In \cite{Saji}, K. Saji study singularities of 2-ruled hypersurfaces in Euclidean 4-space. After defining a non-degenerate 2-ruled hypersurface he gives a necessary and sufficient condition for such a map germ to be right-left equivalent to the cross cap $\times$ interval. And he discusses the behavior of a generic 2-ruled hypersurface map.\\
In \cite{Mustapha} the authors obtain the Gauss map (unit normal vector field) of a 2-ruled hypersurface in Euclidean 4-space with the aid of its general parametric equation. They also obtain Gaussian and mean curvatures of the 2-ruled hypersurface and they give some characterizations about its minimality. Finally, they deal with the first and second Laplace-Beltrami operators of 2-ruled hypersurfaces in $\mathbb{E}^4$. In \cite{yayli,yayli1}, Aslan et al. characterize the ruled surface through quaternions in  $\mathbb{E}^3$ and $\mathbb{E}_{1}^3$. In three dimensions, the quaternions can be used to characterize the ruled surfaces. Identically, the 2-ruled hypersurfaces can be constructed by octonions, for more information about octonions see \cite{John}.\\
Motivated by the above two works, we study in this paper the 2-Ruled hypersurfaces in the Minkowski 4-space $\mathbb{E}^4_1$. We define three types of 2-Ruled hypersurfaces in $\mathbb{E}^4_1$ and we obtain Gaussian and mean curvatures of the 2-ruled hypersurface and some characterizations about its minimality. Moreover, we contract these surfaces via octonions in $\mathbb{E}^4_1$. We also deal with the first Laplace-Beltrami operators of these type of 2-Ruled hypersurfaces in $\mathbb{E}^4_1$. At the end, as an application, we investigate the geometric evolution of a linearly polarized light wave along an optical fiber by means of the 2-ruled hypersurfaces in a four-dimensional Minkowski space.

\section{Preliminaries}
Let $\mathbb{R}^4 = \lbrace (x_0, x_1, x_2, x_3) | x_i \in\mathbb{R} (i = 0, 1, 2, 3) \rbrace$ be an 4-dimensional cartesian space. For any $x = (x_0, x_1, x_2, x_3)$, $y =
(y_0, y_1, y_2, y_3) \in\mathbb{R}^4$, the pseudo-scalar product of $x$ and $y$ is defined by
\begin{eqnarray}\label{1}
\langle x, y\rangle=-x_0y_0+\sum_{i=1}^3 x_iy_i.
\end{eqnarray}
We call $(\mathbb{R}^4, \langle , \rangle)$  the Minkowski 4-space. We shall write $\mathbb{R}^4_1$ instead of $(\mathbb{R}^4, \langle , \rangle)$. We say that a non-zero vector $x\in\mathbb{R}^4_1$ is spacelike, lightlike or timelike if $\langle x , x\rangle>0$, $\langle x, x\rangle=0$ or $\langle x,x \rangle <0$  respectively. The norm of the vector $x\in\mathbb{R}^4_1$ is
\begin{eqnarray}\label{2}
\Vert x\Vert=\sqrt{\vert\langle x,x \rangle\vert }.
\end{eqnarray}
We now define the Hyperbolic 3-space by
\begin{eqnarray}\label{3}
 H^3_+(-1)=\lbrace x\in\mathbb{R}^4_1 \vert \langle x , x\rangle=-1, x_0>0\rbrace,
\end{eqnarray}
and the Sitter 3-space by
\begin{eqnarray}\label{4}
S^3_1=\lbrace x\in\mathbb{R}^4_1 \vert \langle x , x\rangle=1\rbrace.
\end{eqnarray}
We also define the light cone at the origin by
\begin{eqnarray}\label{5}
 LC=\lbrace x\in\mathbb{R}^4_1 \vert x_0\neq 0, \langle x , x\rangle=0\rbrace.
\end{eqnarray}
If $\overrightarrow{x}=(x_0,x_1,x_2, x_3) $, $\overrightarrow{y}=(y_0,y_1,y_2,y_3)$ and $\overrightarrow{z}=(z_0,z_1,z_2,z_3)$ are three vectors in $\mathbb{R}^4_1$, then vector product are defined by
  \begin{eqnarray}\label{eq-(1.2)}
\overrightarrow{x}\times\overrightarrow{y}\times\overrightarrow{z}=\det
\left[
    \begin{array}{cccc}
      -e_1 & e_2 & e_3 & e_4 \\
      x_0 & x_1 & x_2 & x_3 \\
      y_0 & y_1 & y_2 & y_3 \\
      z_0 & z_1 & z_2 & z_3
     \end{array}
\right].
  \end{eqnarray}
If
\begin{align*}
\varphi : \mathbb{R}^3&\longrightarrow \mathbb{R}^4_1 \cr
(x_0,x_1,x_2)&\longmapsto \varphi(x_0,x_1,x_2)=(\varphi_1(x_0,x_1,x_2),\varphi_2(x_0,x_1,x_2),\varphi_3(x_0,x_1,x_2),\varphi_4(u_1,u_2,u_3))
\end{align*}
is a hypersurface in Minkowski 4-space $\mathbb{R}^4_1$, then the Gauss map (i.e., the unit normal vector field), the matrix forms of the first and second fundamental forms are
\begin{align}\label{eq-(1.4)}
G=\frac{\varphi_{x_0}\times\varphi_{x_1}\times\varphi_{x_2}}{\Vert\varphi_{x_0}\times\varphi_{x_1}\times\varphi_{x_2}\Vert},
\end{align}
\begin{align}\label{eq-(1.5)}
[g_{ij}]=
\left[
   \begin{array}{ccc}
    g_{11} & g_{12} & g_{13} \\
    g_{21} & g_{22} & g_{23} \\
    g_{31} & g_{32} & g_{33}
   \end{array}
\right]
\end{align}
and
\begin{align}\label{eq-(1.6)}
[h_{ij}]=
\left[
  \begin{array}{ccc}
   h_{11} & h_{12} & h_{13} \\
   h_{21} & h_{22} & h_{23} \\
   h_{31} & h_{32} & h_{33}
  \end{array}
\right],
\end{align}
respectively, where the coefficients $g_{ij}=\langle \varphi_{x_i},\varphi_{x_j}\rangle$, $h_{ij}=\langle \varphi_{x_ix_j},G\rangle$, $\varphi_{x_i}=\frac{\partial \varphi(x_0,x_1,x_2)}{\partial x_i}$, $\varphi_{x_ix_i}=\frac{\partial^2\varphi(x_0,x_1,x_2)}{\partial x_ix_j}$, $i,j \in \{0,1,2\}$.\\

Also, the matrix of shape operator of the hypersurface $\varphi$ is
\begin{eqnarray}\label{eq-(1.7)}
S=[a_{ij}]=[g^{ij}]\cdot[h_{ij}],
\end{eqnarray}
where $[g^{ij}]$ is the inverse matrix of $[g_{ij}]$.\\
With aid of (\ref{eq-(1.5)})-(\ref{eq-(1.7)}), the Gaussian curvature and mean curvature of a hypersurface in $E^4$ are given by
\begin{eqnarray}\label{eq-(1.8)}
K=\frac{\det[h_{ij}]}{\det[g_{ij}]}
\end{eqnarray}
and
\begin{eqnarray}\label{eq-(1.9)}
3H=trace(S),
\end{eqnarray}
respectively (\ref{eq-(1.9)}).

Let the octonion parameterized by
\begin{eqnarray}
q=a_0+a_1e_1+a_2e_2+a_3e_3+a_4e_4+a_5e_5+a_6e_6+a_7e_7,
\end{eqnarray}
where $a_i, i=0,1,...,7$ are real numbers and the $e_i, i=0,1,...,7$ satisfy the following
\begin{itemize}
\item $e_1,...,e_7$ are square roots of $-1$,
\item $e_i$ and $e_j$ anticommute when $i\neq j$:
$$e_ie_j=-e_je_i$$
\item the index cycling identity holds:
$$e_ie_j=e_k\Rightarrow e_{i+1}e_{j+1}=e_{k+1}$$
where we think of the indices as living in $\mathbb{Z}_7$, and
\item the index doubling identity holds:
$$e_ie_j=e_k\Rightarrow e_{2i}e_{2j}=e_{2k}.$$
\end{itemize}
Now we assume that the reals $a_5=a_6=a_7=0$ and we get the expression
\begin{eqnarray}
Q=a_0+a_1e_1+a_2e_2+a_3e_3+a_4e_4,
\end{eqnarray}
called particular octonion.\\
This particular octonion can be also given in the form
\begin{eqnarray}
Q=S(Q) + V (Q),
\end{eqnarray}
where $S(Q) = a_0$ is the scalar and $V (Q)= a_1e_1+a_2e_2+a_3e_3+a_4e_4$ is the vector part of
$Q$. If $S(Q) = 0$, then $Q = a_1e_1+a_2e_2+a_3e_3+a_4e_4$ is called a pure particular octonion. Particular octonion product of any particular octonion $Q = S(Q) + V (Q)$ and $P = S(P) + V (P)$ is defined by
\begin{eqnarray}
Q\star P\star I=S(Q)S(P)-\langle V(Q), V(P)\rangle+S(q)V (p)\nonumber\\
+ S(p)V (q) + V (q)\times V (p)\times I,
\end{eqnarray}
where $\langle ,\rangle$ and $\times$ denote the usual scalar and vector products in $\mathbb{R}_{1}^4$, respectively, and $I$ is a unitary element of particular octonion.\\
Now we denote the set of all dual numbers by
\begin{eqnarray}
\mathbb{D}=\lbrace A=a+\varepsilon a^*/a,a^*\in\mathbb{R}\rbrace,
\end{eqnarray}
where $\varepsilon$ is the dual unit and satisfying
$$\varepsilon\neq 0, \,\,\varepsilon^2=0\,\,\,\,\,and\,\,\,\,\, r\varepsilon=\varepsilon r,\,\,\,\,\forall  r\in\mathbb{R}.$$
For any dual numbers $A=a+\varepsilon a^*$ and $B=b+\varepsilon b^*$, we have the addition and the multiplication expressed by
$$A+B=(a+b)+\varepsilon(a^*+b^*)$$
and
$$AB=ab+\varepsilon(a^*b+ab^*),$$
respectively.\\
Dual numbers form the module
\begin{eqnarray}
\mathbb{D}^4=\lbrace \tilde{A}=a+\varepsilon a^*/a,a^*\in\mathbb{R}^4\rbrace,
\end{eqnarray}
which is a commutative and associative ring.The element $\tilde{A}\in\mathbb{D}^4$ is called dual vector. The scalar and vector products of any dual vectors $\tilde{A}=a+\varepsilon a^*$ and $\tilde{B}=b+\varepsilon b^*$ are defined by
\begin{eqnarray}
\langle\title{A}, \title{B}\rangle_D=\langle a,b\rangle+\varepsilon(\langle a,b^*\rangle+\langle a^*,b\rangle)
\end{eqnarray}
and
\begin{eqnarray}
\tilde{A}\times_D\tilde{B}\times_D I=a\times b\times I+\varepsilon(a\times b^*\times I+a^*\times b\times I),
\end{eqnarray}
respectively. In the last two equalities, $\langle,\rangle$ and $\times$ denote the usual scalar
and vector products in $\mathbb{R}_{1}^4$, respectively. And the norm of a dual vector $\tilde{A}=a+\varepsilon a^*$ is defined to be
\begin{eqnarray}
N_{\tilde{A}}=\langle\title{A}, \title{A}\rangle_D=\vert a\vert^2+2\varepsilon\langle a,a^*\rangle\in\mathbb{D}.
\end{eqnarray}
Unit dual sphere is defined by
\begin{eqnarray}
\mathbb{S}^3_\mathbb{D}=\lbrace \tilde{A}=a+\varepsilon a^*/ \vert\tilde{A}\vert=1, \tilde{A}\in\mathbb{D}^4\rbrace.
\end{eqnarray}

\section{2-Ruled hypersurfaces of type-1 in $\mathbb{R}^4_1$}
A $2$-ruled hypersurface of type-1 in $\mathbb{R}^4_1$  means (the image of) a map
$\varphi:I_1\times I_2\times I_3\longrightarrow \mathbb{R}^4_1$ of the form
\begin{eqnarray}\label{eq-(2.1)}
\varphi(x,y,z)=\alpha(x)+y\beta(x)+z\gamma(x),
\end{eqnarray}
where $\alpha: I_1\longrightarrow \mathbb{R}^4_1$,  $\beta: I_2\longrightarrow S^3_1$, $\gamma :I_3\longrightarrow S^3_1$ are smooth maps, $S^3_1$ is the Sitter 3-space of $\mathbb{R}^4_1$ and $I_1, I_2, I_3$ are open intervals.\\
We call $\alpha$ a base curves $\beta$ and $\gamma$ director curves. The planes $ (y,z)\longrightarrow \alpha(x)+y\beta(x)+z\gamma(x)$ are called rulings.\\
\quad
So, if we take
\begin{eqnarray}\label{eq-(2.2)}
\left.
    \begin{array}{llllll}
    \alpha(x)&=(&\alpha_{1}(x),&\alpha_2(x),&\alpha_3(x),&\alpha_4(x))  \\
    \beta(x )&=(&\beta_{1} (x),&\beta_2 (x),& \beta_3(x),&\beta_4(x) )   \\
    \gamma(x)&=(&\gamma_{1}(x),&\gamma_2(x),&\gamma_3(x),&\gamma_4(x))
\end{array}
\right\rbrace,
\end{eqnarray}
in (\ref{eq-(2.1)}), then we can write the 2-ruled hypersurface of type-1 as
\begin{align}\label{eq-(2.3)}
\varphi(x,y,z)&= \alpha(x)+y\beta(x)+z\gamma(x)\cr
&=(\varphi_1(x,y,z),\varphi_2(x,y,z),\varphi_3(x,y,z),\varphi_4(x,y,z))\cr
&=\left(
\begin{array}{cc}
\alpha_1(x)+y\beta_1(x)+z\gamma_1(x), & \alpha_2(x)+y\beta_2(x)+z\gamma_2(x),\\
\alpha_3(x)+y\beta_3(x)+z\gamma_3(x), & \alpha_4(x)+y\beta_4(x)+z\gamma_4(x)
\end{array}
  \right).
\end{align}

We see that $\displaystyle\left(-\beta_1^2+\sum^{3}_{i=1}(\beta_i)^2\right)=\left(-\gamma_1^2+\sum^{3}_{i=1}(\gamma_i)^2\right)=1$ and we state $\alpha_i=\alpha_i(x)$, $\beta_i=\beta_i(x)$, $\gamma_i=\gamma_i(x)$, $\varphi_i=\varphi_i(x,y,z)$, $f'=\frac{\partial f(x)}{\partial x}$, $f''=\frac{\partial^2f(x)}{\partial x\partial x}$, $i\in \{1,2,3,4\}$ and $f\in \{\alpha,\beta,\gamma\}$.\\
We denote by
\begin{eqnarray}
E_{ij}=\gamma_i(\alpha'_j+y\beta'_j+z\gamma'_j)\\
F_{ij}=\beta_i(\alpha'_j+y\beta'_j+z\gamma'_j).
\end{eqnarray}

Now, let us prove the following theorem which contains the Gauss map of the 2-ruled hypersurface of type-1 (\ref{eq-(2.3)}).
\begin{theorem}\label{thm-(2)}
The Gauss map of the 2-ruled hypersurface of type-1 $(\ref{eq-(2.3)})$ is
\begin{eqnarray}\label{eq-(2.12)}
G(x,y,z)=\frac{G_1(x,y,z)e_1+G_2(x,y,z)e_2+G_3(x,y,z)e_3+G_4(x,y,z)e_4}{D},
\end{eqnarray}
where
\begin{eqnarray}\label{eq-(2.13)}
G_1(x,y,z)=\beta_2(E_{43}-E_{34})+\beta_3(E_{24}-E_{42})+\beta_4(E_{32}-E_{23})\nonumber\\
G_2(x,y,z)=\beta_1(E_{43}-E_{34})+\beta_3(E_{14}-E_{41})+\beta_4(E_{31}-E_{13})\nonumber\\
G_3(x,y,z)=\beta_1(E_{24}-E_{42})+\beta_2(E_{41}-E_{14})+\beta_4(E_{12}-E_{21})\nonumber\\
G_4(x,y,z)=\beta_1(E_{32}-E_{23})+\beta_2(E_{13}-E_{31})+\beta_3(E_{21}-E_{12})
\end{eqnarray}
and
\begin{eqnarray}\label{eq-(2.15)}
D=\sqrt{-G^2_1(x,y,z)+\sum^4_{i=2}G^2_i(x,y,z)}.
\end{eqnarray}

\end{theorem}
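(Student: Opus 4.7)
The plan is to apply the Gauss map formula \eqref{eq-(1.4)} directly to the parametrization \eqref{eq-(2.3)}. First I would compute the three tangent vectors
$$\varphi_x = \alpha'+y\beta'+z\gamma',\quad \varphi_y=\beta,\quad \varphi_z=\gamma,$$
and substitute them into the Lorentzian triple vector product \eqref{eq-(1.2)}. This reduces the theorem to the expansion of a single $4\times 4$ determinant whose first row is $(-e_1,e_2,e_3,e_4)$ and whose remaining three rows are the component vectors of $\varphi_x$, $\beta$, and $\gamma$.

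Second, I would expand this determinant by cofactors along the first row, producing four scalar coefficients, one per basis vector $e_i$. Each coefficient is, up to a sign, a $3\times 3$ minor in the variables $(\varphi_x)_k$, $\beta_k$, $\gamma_k$. Expanding each such $3\times 3$ minor along the $\beta$-row pulls the $\beta_k$ outside, while every residual $2\times 2$ sub-determinant takes the shape $\gamma_i(\varphi_x)_j-\gamma_j(\varphi_x)_i=E_{ji}-E_{ij}$, matching exactly the abbreviation $E_{ij}=\gamma_i(\alpha_j'+y\beta_j'+z\gamma_j')$ introduced just before the theorem. Collecting the cofactor signs together with the global sign coming from the $-e_1$ in the first row then recovers the four scalar components $G_1,\ldots,G_4$ of \eqref{eq-(2.13)}.

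Finally, I would normalize: by the Minkowski norm definition \eqref{2},
$$\|\varphi_x\times\varphi_y\times\varphi_z\|=\sqrt{\bigl|{-G_1^2}+G_2^2+G_3^2+G_4^2\bigr|},$$
so, under the implicit standing assumption that this cross product is spacelike (equivalently that the hypersurface is non-degenerate and the quantity under the radical in \eqref{eq-(2.15)} is non-negative), the absolute value may be dropped and the denominator equals $D$, yielding \eqref{eq-(2.12)}.

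The main obstacle is nothing conceptually deep; it is clean sign bookkeeping. The computation juggles three independent sign sources, namely the outer cofactor signs $(-1)^{1+j}$, the analogous signs from the inner $3\times 3$ expansion, and the Lorentzian $-e_1$ sitting in the first row. The only real care needed is to verify that these collapse into the antisymmetric differences $E_{ij}-E_{ji}$ and distribute correctly across the four coefficients $\beta_1,\beta_2,\beta_3,\beta_4$ in the order stated in \eqref{eq-(2.13)}; no new geometric input beyond \eqref{eq-(1.4)} and \eqref{eq-(1.2)} is required.
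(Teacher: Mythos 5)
Your proposal follows exactly the paper's own argument: compute $\varphi_x,\varphi_y,\varphi_z$, expand the Lorentzian triple product \eqref{eq-(1.2)} as a $4\times 4$ determinant to obtain the components $G_1,\dots,G_4$ in terms of the $E_{ij}$, and normalize via \eqref{eq-(1.4)}. Your added remark that dropping the absolute value in the norm requires the cross product to be spacelike is a point the paper leaves implicit, but the route is the same.
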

\begin{proof}
If we differentiate (\ref{eq-(2.3)}) we get
\begin{eqnarray*}
{\left\{ \begin{array}{ccc}
\varphi_x(x,y,z)&=&\big(\alpha'_1+y\beta'_1+z\gamma'_1,\alpha'_2+y\beta'_2+z\gamma'_2, \alpha'_3+y\beta'_3+z\gamma'_3, \alpha'_4+y\beta'_4+z\gamma'_4\big)\\
\varphi_y(x,y,z) &=& \big(\beta_1, \beta_2, \beta_3, \beta_4\big)\\
\varphi_z(x,y,z) &=& \big(\gamma_1, \gamma_2, \gamma_3, \gamma_4\big).
\end{array}\right.}
\end{eqnarray*}
By using the vector product in (\ref{eq-(1.2)}), we get
\begin{eqnarray*}
\varphi_x\times\varphi_y\times\varphi_z &=&\Big(\beta_2(E_{43}-E_{34})+\beta_3(E_{24}-E_{42})+\beta_4(E_{32}-E_{23})\Big)e_1\\
&&+\Big(\beta_1(E_{43}-E_{34})+\beta_3(E_{14}-E_{41})+\beta_4(E_{31}-E_{13}) \Big)e_2\\
&&+\Big( \beta_1(E_{24}-E_{42})+\beta_2(E_{41}-E_{14})+\beta_4(E_{12}-E_{21})\Big)e_3\\
&&+\Big( \beta_1(E_{32}-E_{23})+\beta_2(E_{13}-E_{31})+\beta_3(E_{21}-E_{12}) \Big)e_4
\end{eqnarray*}
Now using the unit normal vector formula in (\ref{eq-(1.4)}) we get the result.
\end{proof}
From (\ref{eq-(1.5)}) we obtain the matrix of the first fundamental form
\begin{align}\label{eq-(1.5(1))}
[g_{ij}]=
\left[
   \begin{array}{ccc}
    -(\alpha'_1+y\beta'_1+z\gamma'_1)^2+\sum_{i=2}^4(\alpha'_i+y\beta'_i+z\gamma'_i)^2 & -F_{11}+\sum_{i=2}^4F_{ii} & -E_{11}+\sum_{i=2}^4E_{ii} \\
  -F_{11}+\sum_{i=2}^4F_{ii} & 1 & -\beta_1\gamma_1+\sum_{i=2}^4\beta_i\gamma_i \\
   -E_{11}+\sum_{i=2}^4E_{ii} & -\beta_1\gamma_1+\sum_{i=2}^4\beta_i\gamma_i & 1
   \end{array}
\right].
\end{align}
And we obtain the inverse matrix $[g^{ij}]$ of $[g_{ij}]$ as

\begin{align}\label{eq-(1.5(2))}
[g^{ij}]=\frac{1}{\det[g_{ij}]}
\left[
   \begin{array}{ccc}
    1-e^2 & ce-b & be-c \\
  ce-b & a-c^2 &bc-ae \\
  be-c & bc-ae & a-b^2
   \end{array}
\right].
\end{align}
where
\begin{equation}\label{eq-(2.22)}
\left.
\begin{aligned}
a=&-(\alpha'_1+y\beta'_1+z\gamma'_1)^2+\sum_{i=2}^4(\alpha'_i+y\beta'_i+z\gamma'_i)^2,\cr
b=&-F_{11}+\sum_{i=2}^4F_{ii},\cr
c=& -E_{11}+\sum_{i=2}^4E_{ii},\cr
e=&-\beta_1\gamma_1+\sum_{i=2}^4\beta_i\gamma_i
\end{aligned}
\right\rbrace
\end{equation}
and\\
\begin{align}\label{eq-(2.23)}
\det[g_{ij}]=-b^2+2cbe-c^2-ae^2+a=D.
\end{align}
Furthermore, from (\ref{eq-(1.6)}), the matrix from of the second fundamental from of the 2-ruled hypersurface (\ref{eq-(2.3)} is obtained
by
\begin{equation}\label{eq-(2.24)}
[h_{ij}]=
\left[
\begin{array}{ccc}
h_{11}  &  h_{12}  &  h_{13}  \cr
h_{21}  & 0  &  0  \cr
h_{31}  &  0  &  0
\end{array}
\right],
\end{equation}
where
\begin{equation}\label{eq-(2.25)}
\left.
\begin{aligned}
h_{11}&=\displaystyle \frac{-G_1(\alpha_1''+y\beta_1''+z\gamma_1'')+\sum^{4}_{i=2}G_i(\alpha_i''+y\beta_i''+z\gamma_i'')}{\sqrt{-G^2_1(x,y,z)+\sum^3_{i=1}G^2_i(x,y,z)}},\cr
h_{12}&=h_{21}=\displaystyle\frac{-G_1\beta'_1+\sum_{i=2}^4 G_i\beta'_i}{\sqrt{-G^2_1(x,y,z)+\sum^3_{i=1}G^2_i(x,y,z)}},\cr
h_{13}&=h_{31}=\displaystyle\frac{-G_1\gamma'_1+\sum_{i=2}^4 G_i\gamma'_i}{\sqrt{-G^2_1(x,y,z)+\sum^3_{i=1}G^2_i(x,y,z)}}.
\end{aligned}
\right\rbrace
\end{equation}
We can see easily that the $\det[h_{ij}]=0$. \\
Then we can give the following theorem by using (\ref{eq-(1.8)})
\begin{theorem}
The 2-ruled hypersurfaces of type-1 defined in (\ref{eq-(2.3)}) is flat.
\end{theorem}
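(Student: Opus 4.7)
The plan is to apply the Gaussian curvature formula (\ref{eq-(1.8)}), namely $K=\det[h_{ij}]/\det[g_{ij}]$, so that flatness reduces to verifying $\det[h_{ij}]=0$. The structural reason should be that $\varphi(x,y,z)=\alpha(x)+y\beta(x)+z\gamma(x)$ is affine in the ruling parameters $y$ and $z$, so the curvature information in those directions must vanish.

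First I would differentiate (\ref{eq-(2.3)}) twice and record that
\[
\varphi_{yy}=\varphi_{yz}=\varphi_{zz}=0,
\]
since $\varphi$ is linear in $y$ and in $z$. Taking the Minkowski inner product with the Gauss map $G$ from Theorem \ref{thm-(2)} immediately yields $h_{22}=h_{23}=h_{32}=h_{33}=0$, which is precisely the block form already displayed in (\ref{eq-(2.24)}). Only $h_{11},h_{12}=h_{21},h_{13}=h_{31}$ may be nonzero, and these are the entries exhibited in (\ref{eq-(2.25)}).

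Second, I would compute $\det[h_{ij}]$ by Laplace expansion along the third row of (\ref{eq-(2.24)}). Since two of the three entries in that row are zero, the expansion collapses to $h_{31}\cdot(h_{12}\cdot 0-h_{13}\cdot 0)=0$; equivalently, the second and third rows are both of the form $(\ast,0,0)$ and hence linearly dependent. Thus $\det[h_{ij}]=0$. Inserting this into (\ref{eq-(1.8)}) gives $K=0$ on the (open) domain where $\det[g_{ij}]\neq 0$, which is the non-degenerate locus already being assumed via $D\neq 0$ in (\ref{eq-(2.15)})--(\ref{eq-(2.23)}).

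There is really no serious obstacle: the geometric content is that a ruling by $2$-planes forces the shape operator to be degenerate in the two planar directions, and this degeneracy is exactly what makes the determinant of $[h_{ij}]$ vanish. The only point worth mentioning, if desired, is that the conclusion $K\equiv 0$ is valid on the subset where the first fundamental form is non-degenerate; elsewhere $\varphi$ fails to be an immersion and the notion of Gaussian curvature is not defined in the usual sense.
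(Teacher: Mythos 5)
Your proof is correct and follows exactly the paper's own route: the paper likewise observes that the block form (\ref{eq-(2.24)}) forces $\det[h_{ij}]=0$ and then invokes (\ref{eq-(1.8)}). You merely make explicit the two steps the paper leaves implicit (that $\varphi_{yy}=\varphi_{yz}=\varphi_{zz}=0$ because $\varphi$ is affine in $y,z$, and the non-degeneracy caveat $\det[g_{ij}]\neq 0$), which is a welcome clarification but not a different argument.
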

Now we will prove the following theorem about the mean curvature
\begin{theorem}
The 2-ruled hypersurfaces of type-1 defined in (\ref{eq-(2.3)}) is minimal in $\mathbb{R}^4_1$, if
\begin{eqnarray}\label{eq-2.26(1)}
(1-e^2)\left[-G_1(\alpha_1''+y\beta_1''+z\gamma_1'')+\sum^{4}_{i=2}G_i(\alpha_i''+y\beta_i''+z\gamma_i'')\right]\nonumber\\
+(ce-b)\left[-G_1\beta'_1+\sum_{i=2}^4 G_i\beta'_i\right]+(be-c)\left[-G_1\gamma'_1+\sum_{i=2}^4 G_i\gamma'_i\right]\nonumber\\
+(ce-b)\left[-G_1\beta'_1+\sum_{i=2}^4 G_i\beta'_i\right]
+(be-c)\left[-G_1\gamma'_1+\sum_{i=2}^4 G_i\gamma'_i\right]=0
\end{eqnarray}
\end{theorem}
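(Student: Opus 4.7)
The plan is to invoke the definition $3H=\operatorname{trace}(S)=\operatorname{trace}([g^{ij}]\cdot[h_{ij}])$ from (\ref{eq-(1.9)}) and (\ref{eq-(1.7)}), and then compute the three diagonal entries of the shape operator matrix using the explicit forms of $[g^{ij}]$ in (\ref{eq-(1.5(2))}) and $[h_{ij}]$ in (\ref{eq-(2.24)}). Because the second fundamental form matrix has only a nonzero first row and first column (its $(2,2)$, $(2,3)$, $(3,2)$, $(3,3)$ entries vanish), the diagonal of the product is much shorter than the $3\times 3$ matrix multiplication suggests, and this is where most of the terms drop out.

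Concretely, I would first write
\begin{equation*}
S_{11}=\tfrac{1}{\det[g_{ij}]}\bigl[(1-e^2)h_{11}+(ce-b)h_{21}+(be-c)h_{31}\bigr],
\end{equation*}
\begin{equation*}
S_{22}=\tfrac{1}{\det[g_{ij}]}\bigl[(ce-b)h_{12}+(a-c^{2})\cdot 0+(bc-ae)\cdot 0\bigr],
\end{equation*}
\begin{equation*}
S_{33}=\tfrac{1}{\det[g_{ij}]}\bigl[(be-c)h_{13}+(bc-ae)\cdot 0+(a-b^{2})\cdot 0\bigr],
\end{equation*}
so that
\begin{equation*}
3H=\operatorname{trace}(S)=\tfrac{1}{\det[g_{ij}]}\bigl[(1-e^{2})h_{11}+(ce-b)(h_{12}+h_{21})+(be-c)(h_{13}+h_{31})\bigr].
\end{equation*}
Using the symmetry $h_{12}=h_{21}$ and $h_{13}=h_{31}$ from (\ref{eq-(2.25)}), the bracket becomes the symmetric expression appearing in the statement (with the mixed terms written twice rather than with a factor~$2$).

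Next I would impose the minimality condition $H=0$, which is equivalent to the bracket vanishing since $\det[g_{ij}]\ne 0$ on the regular locus. Multiplying through by the common denominator $\sqrt{-G_{1}^{2}+\sum_{i=2}^{4}G_{i}^{2}}$ that appears in each of $h_{11},h_{12},h_{13}$ from (\ref{eq-(2.25)}) clears the square roots, and substituting the explicit expressions for $h_{11}$, $h_{12}=h_{21}$, $h_{13}=h_{31}$ immediately yields (\ref{eq-2.26(1)}).

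The only delicate point is bookkeeping: one must be careful that the off-diagonal cofactors of $[g^{ij}]$ picked up from the first row of $[g^{ij}]$ match those picked up from the first column when extracting the diagonal of the product, so that the coefficients $(ce-b)$ and $(be-c)$ genuinely appear twice. Apart from that, the argument is a direct substitution into the formula $3H=\operatorname{trace}([g^{ij}][h_{ij}])$, so there is no conceptual obstacle beyond organizing the terms.
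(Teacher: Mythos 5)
Your proposal is correct and follows essentially the same route as the paper: both compute the diagonal entries $S_{11}$, $S_{22}$, $S_{33}$ of $[g^{ij}]\cdot[h_{ij}]$ using the explicit matrices (\ref{eq-(1.5(2))}) and (\ref{eq-(2.24)}), observe that the zero block of $[h_{ij}]$ kills most terms, and set $\operatorname{trace}(S)=0$. Your version is in fact slightly more careful than the paper's, since you keep the $1/\det[g_{ij}]$ factor and justify clearing the common denominator from (\ref{eq-(2.25)}).
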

\begin{proof}
By (\ref{eq-(1.7)}) the matrix of the shape operator is
\begin{align*}
S=
\left[
   \begin{array}{ccc}
    1-e^2 & ce-b & be-c \\
  ce-b & a-c^2 &bc-ae \\
  be-c & bc-ae & a-b^2
   \end{array}
\right]\left[
   \begin{array}{ccc}
    h_{11} & h_{12} & h_{13} \\
   h_{21} & 0 &0 \\
 h_{31} & 0 & 0
   \end{array}
\right].
\end{align*}
Then we get the coefficients of $S$ by
\begin{eqnarray*}
S_{11}&=&(1-e^2)h_{11}+(ce-b)h_{21}+(be-c)h_{31}\\
S_{22}&=&(ce-b)h_{12}\\
S_{33}&=& (be-c)h_{13}.
\end{eqnarray*}
And using (\ref{eq-(2.25)}) and (\ref{eq-(1.9)}) we see that the 2-ruled hypersurfaces is minimal if
\begin{eqnarray*}
S_{11}+S_{22}+S_{33}=0,
\end{eqnarray*}
then that end the proof.
\end{proof}
\begin{corollary}
If the curves $\beta$ and $\gamma$ are orthogonal then the 2-ruled hypersurfaces of type-1 defined in (\ref{eq-(2.3)}) is minimal if
\begin{eqnarray}\label{eq-2.27}
\left[-G_1(\alpha_1''+y\beta_1''+z\gamma_1'')+\sum^{4}_{i=2}G_i(\alpha_i''+y\beta_i''+z\gamma_i'')\right]\nonumber\\
-b\left[-G_1\beta'_1+\sum_{i=2}^4 G_i\beta'_i\right]-c\left[-G_1\gamma'_1+\sum_{i=2}^4 G_i\gamma'_i\right]\nonumber\\
-b\left[-G_1\beta'_1+\sum_{i=2}^4 G_i\beta'_i\right]
-c\left[-G_1\gamma'_1+\sum_{i=2}^4 G_i\gamma'_i\right]=0.
\end{eqnarray}
\end{corollary}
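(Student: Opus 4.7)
The plan is to derive the corollary as a direct specialization of the preceding theorem by interpreting the orthogonality hypothesis in terms of the quantity $e$ defined in (\ref{eq-(2.22)}).

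First, I would observe that the Minkowski inner product of the director curves is precisely
\begin{equation*}
\langle \beta(x), \gamma(x)\rangle = -\beta_1\gamma_1 + \sum_{i=2}^{4}\beta_i\gamma_i = e,
\end{equation*}
so the orthogonality hypothesis $\langle\beta,\gamma\rangle = 0$ is equivalent to $e \equiv 0$ along $I_1$. This is the only content of the hypothesis that enters the minimality condition, and it collapses three of the coefficients appearing in (\ref{eq-2.26(1)}) to simple expressions.

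Next, I would substitute $e=0$ directly into the minimality condition (\ref{eq-2.26(1)}) from the previous theorem. Under this substitution, the coefficient $(1-e^2)$ becomes $1$, the coefficient $(ce-b)$ becomes $-b$, and the coefficient $(be-c)$ becomes $-c$. Inserting these three values into (\ref{eq-2.26(1)}) term by term reproduces (\ref{eq-2.27}) verbatim, which is exactly the desired minimality criterion under the orthogonality assumption. No additional computation with the first or second fundamental forms is needed, since all of that work has already been done in establishing (\ref{eq-2.26(1)}).

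I do not expect any real obstacle here; the result is purely a substitution corollary. The only point worth being careful about is that $e$ is the Minkowskian (not Euclidean) inner product, so that the orthogonality condition must be read with respect to $\langle\cdot,\cdot\rangle$ from (\ref{1}); the formula for $e$ in (\ref{eq-(2.22)}) matches this convention, so the substitution $e=0$ is legitimate. One could optionally remark that since $\beta,\gamma \in S^3_1$ are already unit spacelike vectors, the hypothesis $e=0$ is automatically consistent with the constraints already imposed in the definition of the type-1 hypersurface, so no further regularity or sign condition is required.
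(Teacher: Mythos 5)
Your proposal is correct and matches the paper's (implicit) argument: the corollary is exactly the specialization of the minimality condition (\ref{eq-2.26(1)}) obtained by setting $e=\langle\beta,\gamma\rangle=0$, which turns $(1-e^2)$, $(ce-b)$, $(be-c)$ into $1$, $-b$, $-c$ respectively. The paper offers no separate proof because this substitution is the whole content, and your identification of $e$ with the Minkowski inner product of the director curves is the right (and only) point to check.
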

The Laplace-Beltrami operator of a smooth function $f=f(x^1, x^2, x^3)$ of class $C^3$ with respect to the first fundamental form of a hypersurface is defined as follows:
\begin{eqnarray}\label{eq-3.1}
\Delta f=\frac{1}{\sqrt{\det[g_{ij}]}}\sum_{i,j}^3\frac{\partial}{\partial x^i}\left(\sqrt{\det[g_{ij}]}g^{ij}\frac{\partial f}{\partial x^j}\right).
\end{eqnarray}
Using (\ref{eq-3.1}) we get the Laplace-Beltrami operator of the 2-ruled hypersurface of type-1 (\ref{eq-(2.2)}) by
\begin{eqnarray*}
\Delta\varphi=(\Delta\varphi_1,\Delta\varphi_2,\Delta\varphi_3,\Delta\varphi_4),
\end{eqnarray*}
where
\begin{eqnarray}\label{eq-3.2}
\Delta\varphi_i=\frac{1}{\sqrt{ D} }\left[
\begin{aligned}
&   \frac{\partial}{\partial x}\left(\frac{(1-e^2)\varphi_{ix}+(ce-b)\varphi_{iy}+(be-c)\varphi_{iz}}{\sqrt{\det[g_{ij}]}}\right)\\
& + \frac{\partial}{\partial y}\left(\frac{(ce-b)\varphi_{ix}+(a-c^2)\varphi_{iy}+(bc-ae)\varphi_{iz}}{\sqrt{\det[g_{ij}]}}\right) \\
&+ \frac{\partial}{\partial z}\left(\frac{(be-c)\varphi_{ix}+(bc-ae)\varphi_{iy}+(a-b^2)\varphi_{iz}}{\sqrt{\det[g_{ij}]}}\right)
\end{aligned}
\right].
\end{eqnarray}
That is
\begin{eqnarray}\label{eq-3.3}
\Delta\varphi_i=\frac{1}{\sqrt{ D} }\left[
\begin{aligned}
&   \frac{\partial}{\partial x}\left(\frac{(1-e^2)(\alpha'_i+y\beta'_i+z\gamma'_i)+(ce-b)\beta_i+(be-c)\gamma_i}{\sqrt{\det[g_{ij}]}}\right)\\
& + \frac{\partial}{\partial y}\left(\frac{(ce-b)(\alpha'_i+y\beta'_i+z\gamma'_i)+(a-c^2)\beta_i+(bc-ae)\gamma_i}{\sqrt{\det[g_{ij}]}}\right) \\
&+ \frac{\partial}{\partial z}\left(\frac{(be-c)(\alpha'_i+y\beta'_i+z\gamma'_i)+(bc-ae)\beta_i+(a-b^2)\gamma_i}{\sqrt{\det[g_{ij}]}}\right)
\end{aligned}
\right].
\end{eqnarray}
If we suppose that $\beta$ and $\gamma$ are orthogonal,then the Laplace-Beltrami operator of the 2-ruled hypersuface of type-1 is given by
\begin{eqnarray}\label{eq-3.4}
\Delta\varphi_i=\frac{1}{\sqrt{ a-b^2-c^2} }\left[
\begin{aligned}
&   \frac{\partial}{\partial x}\left(\frac{(\alpha'_i+y\beta'_i+z\gamma'_i)-b\beta_i-c\gamma_i}{\sqrt{a-b^2-c^2}}\right)\\
& + \frac{\partial}{\partial y}\left(\frac{-b(\alpha'_i+y\beta'_i+z\gamma'_i)+(a-c^2)\beta_i+bc\gamma_i}{\sqrt{a-b^2-c^2}}\right) \\
&+ \frac{\partial}{\partial z}\left(\frac{-c(\alpha'_i+y\beta'_i+z\gamma'_i)+bc\beta_i+(a-b^2)\gamma_i}{\sqrt{a-b^2-c^2}}\right)
\end{aligned}
\right].
\end{eqnarray}
\begin{theorem}
The components of the Laplace-Beltrami operator of the 2-ruled hypersurface of type-1 are
\begin{eqnarray}\label{eq-3.5(2)}
\Delta\varphi_i=\frac{1}{\sqrt{Q} }\left[
\begin{aligned}
&\frac{(\alpha''_i+y\beta''_i+z\gamma''_i)-(b\beta_i)_x-(c\gamma_i)_x)Q-P_1(\alpha'_i+y\beta'_i+z\gamma'_i-b\beta_i-c\gamma_i )}{Q^{\frac{3}{2}}}\\
& +\frac{(-b\beta'_i+((a-c^2)\beta_i)_y+(bc\gamma_i)_y)Q-P_2(-b(\alpha'_i+y\beta'_i+z\gamma'_i)+(a-c^2)\beta_i+bc\gamma_i)}{Q^\frac{3}{2}} \\
&+ \frac{(-c\gamma'_i+(bc\beta_i)_z+((a-b^2)\gamma_i)_z)Q-P_3(-c(\alpha'_i+y\beta'_i+z\gamma'_i)+bc\beta_i+(a-b^2)\gamma_i)}{Q^\frac{3}{2}}
\end{aligned}
\right],
\end{eqnarray}
where $i = 1, 2, 3, 4$; $\beta$ and $\gamma$ are orthogonal; $Q=a-b^2-c^2$, $P_1=a_x-2bb_x-2cc_x$, $P_2=a_y-2bb_y-2cc_y$, $P_3=a_z-2bb_z-2cc_z$.
\begin{example}
Let $\varphi$ be the 2-ruled hypersurface of type-1 defined by
\begin{eqnarray*}
\varphi(x,y,z)=\Big(3x+7+\frac{y}{\sqrt{7}}, -5x+1+\frac{z}{\sqrt{5}}, x+\frac{2y\sqrt{2}}{\sqrt{7}}, -4x-1+\frac{2z}{\sqrt{5}}\Big).
\end{eqnarray*}
We take
$\alpha(x)=(3x+7, -5x+1, x; -4x-1)$, $\beta(x)=(\frac{1}{\sqrt{7}}, 0, \frac{2\sqrt{2}}{\sqrt{7}}, 0)$, $\gamma(x)=(0,\frac{1}{\sqrt{5}},0, \frac{2}{\sqrt{5}})$.\\
An easy computation show that $\varphi$ is minimal. And the Laplace-Beltrami operator of $\varphi$ is zero.
\end{example}
\end{theorem}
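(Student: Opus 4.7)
The plan is to derive the stated formula by directly applying the quotient rule to each of the three summands in equation (\ref{eq-3.4}), which already supplies the Laplace--Beltrami operator in the orthogonal case $\langle\beta,\gamma\rangle=0$. The first thing to check is the denominator: setting $e=0$ in (\ref{eq-(2.23)}) gives $\det[g_{ij}] = a - b^{2} - c^{2} = Q$, so the outer $\frac{1}{\sqrt{\det[g_{ij}]}}$ in (\ref{eq-3.1}) becomes $\frac{1}{\sqrt{Q}}$ and the inner square roots also become $\sqrt{Q}$, matching the form of the target expression.

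Next I would differentiate $Q = a - b^{2} - c^{2}$ with respect to each variable and record
\begin{equation*}
Q_x = a_x - 2bb_x - 2cc_x = P_1,\qquad Q_y = P_2,\qquad Q_z = P_3,
\end{equation*}
so that each occurrence of $\partial_k(1/\sqrt{Q})$ produces a factor $-P_k/(2Q^{3/2})$. Then for each index $i\in\{1,2,3,4\}$ I would apply the quotient rule $\partial_k(N_k/\sqrt{Q}) = (N_k'\sqrt{Q} - N_k Q_k/(2\sqrt{Q}))/Q$ to the three fractions in (\ref{eq-3.4}), whose numerators $N_1,N_2,N_3$ are, respectively,
\begin{equation*}
(\alpha'_i+y\beta'_i+z\gamma'_i)-b\beta_i-c\gamma_i,\quad -b(\alpha'_i+y\beta'_i+z\gamma'_i)+(a-c^2)\beta_i+bc\gamma_i,
\end{equation*}
and the analogous $z$-numerator. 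Differentiating the first with respect to $x$ produces $(\alpha''_i+y\beta''_i+z\gamma''_i)-(b\beta_i)_x-(c\gamma_i)_x$; differentiating the second with respect to $y$ uses $\partial_y(\alpha'_i+y\beta'_i+z\gamma'_i)=\beta'_i$ and yields $-b\beta'_i + ((a-c^2)\beta_i)_y+(bc\gamma_i)_y$; and the $z$-derivative of the third gives the third bracket of (\ref{eq-3.5(2)}). Combining these three derivatives under the common prefactor $1/\sqrt{Q}$ and rewriting the $-N_kQ_k/(2Q)$ pieces as $-P_k N_k/Q^{3/2}$ (after clearing the $\sqrt{Q}$ in the numerator against one factor of $Q$) yields exactly (\ref{eq-3.5(2)}).

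The only real obstacle is bookkeeping: one must carefully track that the prefactor rewriting of the quotient rule absorbs the $\sqrt{Q}$ factors into $Q^{3/2}$ in the denominator while the numerator pieces $(N_k')\,Q$ and $-P_k N_k$ appear with the correct signs in each of the three terms. Once the three terms are assembled with the common outer $1/\sqrt{Q}$, the theorem follows. No additional geometric input is needed beyond the orthogonality assumption on $\beta,\gamma$, which reduces $\det[g_{ij}]$ to $Q$ and zeros out the cross terms in $[g^{ij}]$ used in (\ref{eq-3.4}).
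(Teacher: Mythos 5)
Your route is the same one the paper implicitly takes: the theorem is just equation (\ref{eq-3.4}) with the three outer derivatives carried out, and your setup is correct — putting $e=0$ reduces $\det[g_{ij}]$ in (\ref{eq-(2.23)}) to $a-b^2-c^2=Q$ and turns the cofactor matrix into the coefficients appearing in (\ref{eq-3.4}). However, there is a concrete slip in your final assembly. You correctly record that $\partial_k\bigl(Q^{-1/2}\bigr)=-\tfrac{1}{2}Q_k\,Q^{-3/2}$, so the quotient rule gives
\begin{equation*}
\frac{\partial}{\partial x^k}\Bigl(\frac{N_k}{\sqrt{Q}}\Bigr)=\frac{N_k'\,Q-\tfrac{1}{2}Q_k\,N_k}{Q^{3/2}},
\end{equation*}
but you then assert that the second piece can be rewritten as $-P_kN_k/Q^{3/2}$. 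With the $P_k$ as defined in the statement one has $P_1=a_x-2bb_x-2cc_x=Q_x$ exactly, so the correct coefficient is $-\tfrac{1}{2}P_kN_k$, not $-P_kN_k$; the factor $\tfrac12$ cannot be ``cleared'' against anything. As written, your last step does not follow — either the $P_k$ must be redefined as half of the stated expressions or the $\tfrac12$ must be kept — and, carried out correctly, your own computation shows that the displayed formula (\ref{eq-3.5(2)}) contains the same factor-of-two discrepancy.

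A smaller gap: when you differentiate the second numerator with respect to $y$ (and the third with respect to $z$) you treat $b$ and $c$ as constants in the products $-b(\alpha_i'+y\beta_i'+z\gamma_i')$ and $-c(\alpha_i'+y\beta_i'+z\gamma_i')$, keeping only $-b\beta_i'$ and $-c\gamma_i'$. The product rule also produces $-b_y(\alpha_i'+y\beta_i'+z\gamma_i')$ and $-c_z(\alpha_i'+y\beta_i'+z\gamma_i')$. These terms do vanish, but only because $b=\langle\beta,\varphi_x\rangle$ and $c=\langle\gamma,\varphi_x\rangle$ give $b_y=\langle\beta,\beta'\rangle=0$ and $c_z=\langle\gamma,\gamma'\rangle=0$, which follows from $\beta,\gamma$ taking values in $S^3_1$; that justification should be stated explicitly, since $b$ and $c$ do depend on $y$ and $z$ in general (for instance $b_z=\langle\beta,\gamma'\rangle$ need not vanish).
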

\section{2-Ruled hypersurfaces of type-2 in $\mathbb{R}^4_1$}
A $2$-ruled hypersurface of type-1 in $\mathbb{R}^4_1$  means (the image of) a map
$\varphi:I_1\times I_2\times I_3\longrightarrow \mathbb{R}^4_1$ of the form
\begin{eqnarray}\label{eq-(4.1)}
\varphi(x,y,z)=\alpha(x)+y\beta(x)+z\gamma(x),
\end{eqnarray}
where $\alpha: I_1\longrightarrow \mathbb{R}^4_1$,  $\beta: I_2\longrightarrow H^3_+(-1)$, $\gamma :I_3\longrightarrow H^3_+(-1)$ are smooth maps, $H^3_+(-1)$ is the hyperbolic 3-space of $\mathbb{R}^4_1$ and $I_1, I_2, I_3$ are open intervals.\\
We call $\alpha$ a base curve, $\beta$ and $\gamma$ director curves. The planes $ (y,z)\longrightarrow \alpha(x)+y\beta(x)+z\gamma(x)$ are called rulings.\\
\quad
So, if we take
\begin{eqnarray}\label{eq-(4.2)}
\left.
    \begin{array}{llllll}
    \alpha(x)&=(&\alpha_{1}(x),&\alpha_2(x),&\alpha_3(x),&\alpha_4(x))  \\
    \beta(x )&=(&\beta_{1} (x),&\beta_2 (x),& \beta_3(x),&\beta_4(x) )   \\
    \gamma(x)&=(&\gamma_{1}(x),&\gamma_2(x),&\gamma_3(x),&\gamma_4(x))
\end{array}
\right\rbrace
\end{eqnarray}
in (\ref{eq-(2.1)}), then we can write the 2-ruled hypersurface of type-1 as
\begin{align}\label{eq-(4.3)}
\varphi(x,y,z)&= \alpha(x)+y\beta(x)+z\gamma(x)\cr
&=(\varphi_1(x,y,z),\varphi_2(x,y,z),\varphi_3(x,y,z),\varphi_4(x,y,z))\cr
&=\left(
\begin{array}{cc}
\alpha_1(x)+y\beta_1(x)+z\gamma_1(x), & \alpha_2(x)+y\beta_2(x)+z\gamma_2(x),\\
\alpha_3(x)+y\beta_3(x)+z\gamma_3(x), & \alpha_4(x)+y\beta_4(x)+z\gamma_4(x)
\end{array}
  \right).
\end{align}

We see that $\displaystyle\left(-\beta_1^2+\sum^{3}_{i=1}(\beta_i)^2\right)=\left(-\gamma_1^2+\sum^{3}_{i=1}(\gamma_i)^2\right)=-1$ and we state $\alpha_i=\alpha_i(x)$, $\beta_i=\beta_i(x)$, $\gamma_i=\gamma_i(x)$, $\varphi_i=\varphi_i(x,y,z)$, $f'=\frac{\partial f(x)}{\partial x}$, $f''=\frac{\partial^2f(x)}{\partial x\partial x}$, $i\in \{1,2,3,4\}$ and $f\in \{\alpha,\beta,\gamma\}$.\\
From (\ref{eq-(1.5(1))}) we obtain the matrix of the first fundamental form
\begin{align}\label{eq-(4.4)}
[g_{ij}]=
\left[
   \begin{array}{ccc}
    -(\alpha'_1+y\beta'_1+z\gamma'_1)^2+\sum_{i=2}^4(\alpha'_i+y\beta'_i+z\gamma'_i)^2 & -F_{11}+\sum_{i=2}^4F_{ii} & -E_{11}+\sum_{i=2}^4E_{ii} \\
  -F_{11}+\sum_{i=2}^4F_{ii} & -1 & -\beta_1\gamma_1+\sum_{i=2}^4\beta_i\gamma_i \\
   -E_{11}+\sum_{i=2}^4E_{ii} & -\beta_1\gamma_1+\sum_{i=2}^4\beta_i\gamma_i & -1
   \end{array}
\right].
\end{align}
And we obtain the inverse matrix $[g^{ij}]$ of $[g_{ij}]$ as

\begin{align}\label{eq-(4.5)}
[g^{ij}]=\frac{1}{\det[g_{ij}]}
\left[
   \begin{array}{ccc}
    1-e^2 & ce+b & be+c \\
  ce+b & -a-c^2 &bc-ae \\
  be+c & bc-ae & -a-b^2
   \end{array}
\right].
\end{align}
where
$a$, $b$, $c$ and $e$ are the same in (\ref{eq-(2.22)})
and
\begin{align}\label{eq-(4.6)}
\det[g_{ij}]=b^2+2cbe+c^2-ae^2+a=D.
\end{align}
Furthermore, from (\ref{eq-(1.6)}), the matrix from of the second fundamental from of the 2-ruled hypersurface (\ref{eq-(4.3)}) is the same given in (\ref{eq-(2.24)}) and (\ref{eq-(2.25)}). And we have the following theorem since the $\det [h_{ij}]=0$.
\begin{theorem}
The 2-ruled hypersurfaces of type-2 defined in (\ref{eq-(4.3)}) is flat.
\end{theorem}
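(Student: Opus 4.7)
The plan is to observe that, exactly as in the type-1 case, flatness follows immediately from the special block structure of the second fundamental form matrix $[h_{ij}]$ together with the formula $K=\det[h_{ij}]/\det[g_{ij}]$ from (\ref{eq-(1.8)}).

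First I would record the parametrization: from (\ref{eq-(4.3)}) the first partial derivatives $\varphi_y=\beta(x)$ and $\varphi_z=\gamma(x)$ depend only on $x$, hence all mixed second partials in the $(y,z)$-directions vanish identically, i.e.\ $\varphi_{yy}=\varphi_{yz}=\varphi_{zy}=\varphi_{zz}=0$. Consequently, using $h_{ij}=\langle \varphi_{x_ix_j},G\rangle$, we obtain $h_{22}=h_{23}=h_{32}=h_{33}=0$, which is precisely why the matrix $[h_{ij}]$ has the form given in (\ref{eq-(2.24)})--(\ref{eq-(2.25)}) — the author has already observed that this is unchanged from the type-1 case because the director curves still enter only linearly in the parametrization, independently of whether they lie in $S^3_1$ or in $H^3_+(-1)$.

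Next I would compute the determinant of the resulting matrix by cofactor expansion along either the second or third row: because both of those rows have the pattern $(h_{21},0,0)$ or $(h_{31},0,0)$, every $2\times 2$ minor appearing in the expansion contains a full zero row or column, and hence
\begin{equation*}
\det[h_{ij}] \;=\; h_{11}\,(0\cdot 0 - 0\cdot 0) \;-\; h_{12}\,(h_{21}\cdot 0 - 0\cdot h_{31}) \;+\; h_{13}\,(h_{21}\cdot 0 - 0\cdot h_{31}) \;=\;0.
\end{equation*}
Plugging into $K=\det[h_{ij}]/\det[g_{ij}]$ and using (\ref{eq-(4.6)}) to know that the denominator is the quantity $D$ (nonzero on the regular part of the hypersurface), one concludes $K\equiv 0$, which is the definition of flatness.

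There is essentially no obstacle; the only subtle point worth flagging is that one must assume regularity, i.e.\ $\det[g_{ij}]\neq 0$ on the domain, so that the Gauss map $G$ is defined and the formula for $K$ applies. Since the change from $S^3_1$ to $H^3_+(-1)$ only flips the signs $\langle\beta,\beta\rangle$ and $\langle\gamma,\gamma\rangle$ from $+1$ to $-1$ (which affects the diagonal entries of $[g_{ij}]$ in (\ref{eq-(4.4)}) but leaves the off-diagonals and all of $[h_{ij}]$ intact), the argument is essentially a word-for-word transcription of the proof of the earlier flatness theorem for the type-1 surface.
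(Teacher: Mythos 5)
Your proof is correct and follows exactly the paper's route: observing that the linearity of the parametrization in $y$ and $z$ forces $h_{22}=h_{23}=h_{32}=h_{33}=0$, so $\det[h_{ij}]=0$ and hence $K=\det[h_{ij}]/\det[g_{ij}]=0$. The paper states this in one line by noting that $[h_{ij}]$ is the same as in the type-1 case; your cofactor expansion and the remark on regularity simply make that explicit.
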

For the mean curvature we have
\begin{theorem}
The 2-ruled hypersurfaces of type-2 defined in (\ref{eq-(4.3)}) is minimal in $\mathbb{R}^4_1$, if
\begin{eqnarray}\label{eq-2.26}
(1-e^2)\left[-G_1(\alpha_1''+y\beta_1''+z\gamma_1'')+\sum^{4}_{i=2}G_i(\alpha_i''+y\beta_i''+z\gamma_i'')\right]\nonumber\\
+(ce+b)\left[-G_1\beta'_1+\sum_{i=2}^4 G_i\beta'_i\right]+(be+c)\left[-G_1\gamma'_1+\sum_{i=2}^4 G_i\gamma'_i\right]\nonumber\\
+(ce+b)\left[-G_1\beta'_1+\sum_{i=2}^4 G_i\beta'_i\right]
+(be+c)\left[-G_1\gamma'_1+\sum_{i=2}^4 G_i\gamma'_i\right]=0.
\end{eqnarray}
\end{theorem}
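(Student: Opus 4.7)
The plan is to follow, mutatis mutandis, the trace-of-shape-operator calculation carried out for type-1, adjusting only for the sign changes in the inverse metric caused by the hyperbolic normalization $\langle\beta,\beta\rangle=\langle\gamma,\gamma\rangle=-1$. The key structural observation is that the second fundamental form $[h_{ij}]$ carries over verbatim from \eqref{eq-(2.24)}--\eqref{eq-(2.25)}: the Gauss map of Theorem \ref{thm-(2)} and the partial derivatives $\varphi_x,\varphi_y,\varphi_z,\varphi_{xx}$ depend only on the components of $\alpha,\beta,\gamma$ themselves and not on the values of $\langle\beta,\beta\rangle$ or $\langle\gamma,\gamma\rangle$. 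Hence the sole new ingredient entering the computation is the inverse first fundamental form \eqref{eq-(4.5)}, whose off-diagonal entries now read $ce+b$ and $be+c$ in place of the $ce-b$ and $be-c$ appearing in the type-1 case.

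Concretely, I would form $S=[g^{ij}]\cdot[h_{ij}]$ by \eqref{eq-(1.7)}. Since the last two columns of $[h_{ij}]$ are zero except in row one, the three diagonal entries of $S$ reduce to
\[
S_{11}=(1-e^2)h_{11}+(ce+b)h_{21}+(be+c)h_{31},\quad S_{22}=(ce+b)h_{12},\quad S_{33}=(be+c)h_{13},
\]
up to the common positive factor $1/\det[g_{ij}]$, which is irrelevant for the vanishing condition. Using the symmetries $h_{12}=h_{21}$ and $h_{13}=h_{31}$ together with the identity $3H=\mathrm{trace}(S)$ from \eqref{eq-(1.9)}, the minimality condition $H=0$ becomes
\[
(1-e^2)h_{11}+2(ce+b)h_{12}+2(be+c)h_{13}=0,
\]
and substituting the expressions for $h_{11},h_{12},h_{13}$ from \eqref{eq-(2.25)} and clearing their common denominator yields exactly \eqref{eq-2.26}.

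There is no real conceptual obstacle here. The main thing to watch is the bookkeeping of the two sign flips inherited from $\langle\beta,\beta\rangle=\langle\gamma,\gamma\rangle=-1$, so that the terms involving $b$ and $c$ appear with the correct $+$ signs in the final equation, rather than with the $-$ signs of the type-1 proof. As a sanity check one should also observe that $\det[g_{ij}]$ given by \eqref{eq-(4.6)} is nonzero and of constant sign on the relevant domain, so that the overall denominator may legitimately be cleared uniformly across the three diagonal contributions to the trace.
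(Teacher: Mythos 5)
Your proposal is correct and follows essentially the same route as the paper: form $S=[g^{ij}]\cdot[h_{ij}]$ with the type-2 inverse metric \eqref{eq-(4.5)} and the unchanged second fundamental form \eqref{eq-(2.24)}--\eqref{eq-(2.25)}, read off the diagonal entries, and set $\mathrm{trace}(S)=0$. Your explicit justification that $[h_{ij}]$ carries over verbatim and that the nonzero determinant factor may be cleared is a slight tightening of the paper's argument, but the computation is identical.
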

\begin{proof}
By (\ref{eq-(1.7)}) the matrix of the shape operator is
\begin{align*}
S=
\left[
   \begin{array}{ccc}
    1-e^2 & ce-b & be-c \\
  ce-b & a-c^2 &bc-ae \\
  be-c & bc-ae & a-b^2
   \end{array}
\right]\left[
   \begin{array}{ccc}
    h_{11} & h_{12} & h_{13} \\
   h_{21} & 0 &0 \\
 h_{31} & 0 & 0
   \end{array}
\right]
\end{align*}
Then we get the coefficients of $S$ by
\begin{eqnarray*}
S_{11}&=&(1-e^2)h_{11}+(ce+b)h_{21}+(be+c)h_{31}\\
S_{22}&=&(ce+b)h_{12}\\
S_{33}&=& (be+c)h_{13}.
\end{eqnarray*}
And using (\ref{eq-(2.25)}) and (\ref{eq-(1.9)}) we see that the 2-ruled hypersurfaces of type-2 is minimal if
\begin{eqnarray*}
S_{11}+S_{22}+S_{33}=0,
\end{eqnarray*}
then that end the proof.
\end{proof}
\begin{corollary}
If the curves $\beta$ and $\gamma$ are orthogonal then the 2-ruled hypersurfaces of type-2 defined in (\ref{eq-(4.3)}) is minimal if
\begin{eqnarray}\label{eq-2.27(1)}
\left[-G_1(\alpha_1''+y\beta_1''+z\gamma_1'')+\sum^{4}_{i=2}G_i(\alpha_i''+y\beta_i''+z\gamma_i'')\right]\nonumber\\
+b\left[-G_1\beta'_1+\sum_{i=2}^4 G_i\beta'_i\right]+c\left[-G_1\gamma'_1+\sum_{i=2}^4 G_i\gamma'_i\right]\nonumber\\
+b\left[-G_1\beta'_1+\sum_{i=2}^4 G_i\beta'_i\right]
+c\left[-G_1\gamma'_1+\sum_{i=2}^4 G_i\gamma'_i\right]=0.
\end{eqnarray}
\end{corollary}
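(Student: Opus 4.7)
The plan is to derive this corollary as a direct specialization of the preceding Theorem (the minimality condition \eqref{eq-2.26} for type-2). The only new hypothesis is orthogonality of the director curves $\beta$ and $\gamma$, so I would first translate this hypothesis into the language of the coefficients used in the theorem. Recalling from \eqref{eq-(2.22)} that $e=-\beta_1\gamma_1+\sum_{i=2}^4\beta_i\gamma_i$ is precisely the Minkowski inner product $\langle\beta,\gamma\rangle$, orthogonality of $\beta$ and $\gamma$ in $\mathbb{R}^4_1$ is equivalent to $e=0$. This single substitution drives the entire argument.

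Next, I would substitute $e=0$ into the minimality criterion \eqref{eq-2.26}. Under this substitution the coefficients simplify as $1-e^2\rightsquigarrow 1$, $ce+b\rightsquigarrow b$, and $be+c\rightsquigarrow c$. Plugging these reductions into the three brackets of \eqref{eq-2.26} produces exactly the expression asserted in \eqref{eq-2.27(1)}. No further manipulation is required: the corollary is just the $e=0$ specialization of the theorem, so once the identification $e=\langle\beta,\gamma\rangle$ is stated, the proof reduces to a line of substitution.

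There is essentially no obstacle, only a bookkeeping check: one must verify that the inverse-metric entries in \eqref{eq-(4.5)} and the shape-operator entries $S_{11},S_{22},S_{33}$ in the proof of the preceding theorem do in fact reduce in the stated way when $e=0$, which they do termwise. Consequently, the write-up can be given in three short steps: (i) observe $e=\langle\beta,\gamma\rangle$ so $\beta\perp\gamma\iff e=0$; (ii) substitute $e=0$ in \eqref{eq-2.26}; (iii) collect terms to obtain \eqref{eq-2.27(1)}, completing the proof.
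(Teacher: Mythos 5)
Your proposal is correct and is exactly the argument the paper intends (the paper gives no separate proof of this corollary, treating it as the immediate $e=0$ specialization of the preceding theorem): since $e=-\beta_1\gamma_1+\sum_{i=2}^4\beta_i\gamma_i=\langle\beta,\gamma\rangle$, orthogonality gives $e=0$, and substituting this into \eqref{eq-2.26} reduces $1-e^2$, $ce+b$, $be+c$ to $1$, $b$, $c$, yielding \eqref{eq-2.27(1)}.
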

To end this section, we will give the operator of Laplace-Beltrami in the following theorem
\begin{theorem}
The components of the Laplace-Beltrami operator of the 2-ruled hypersurface of type-2 are
\begin{eqnarray} \label{eq-3.5}
\Delta\varphi_i=\frac{1}{\sqrt{Q} }\left[
\begin{aligned}
&\frac{(\alpha''_i+y\beta''_i+z\gamma''_i)+(b\beta_i)_x+(c\gamma_i)_x)Q-P_1(\alpha'_i+y\beta'_i+z\gamma'_i+b\beta_i+c\gamma_i )}{Q^{\frac{3}{2}}}\\
& +\frac{(b\beta'_i+((-a-c^2)\beta_i)_y+(bc\gamma_i)_y)Q-P_2(b(\alpha'_i+y\beta'_i+z\gamma'_i)+(-a-c^2)\beta_i+bc\gamma_i)}{Q^\frac{3}{2}} \\
&+ \frac{(c\gamma'_i+(bc\beta_i)_z+((-a-b^2)\gamma_i)_z)Q-P_3(c(\alpha'_i+y\beta'_i+z\gamma'_i)+bc\beta_i+(-a-b^2)\gamma_i)}{Q^\frac{3}{2}}
\end{aligned}
\right],
\end{eqnarray}
where $i = 1, 2, 3, 4$; $\beta$ and $\gamma$ are orthogonal; $Q=a+b^2+c^2$, $P_1=a_x+2bb_x+2cc_x$, $P_2=a_y+2bb_y+2cc_y$, $P_3=a_z+2bb_z+2cc_z$.
\end{theorem}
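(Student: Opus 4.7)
The plan is to imitate the derivation already carried out for the Laplace--Beltrami operator of the type-1 hypersurface, making only the changes forced by replacing the de Sitter space $S^3_1$ with the hyperbolic space $H^3_+(-1)$ as the target of the director curves. Because the parametrization (\ref{eq-(4.3)}) has the same algebraic form as (\ref{eq-(2.3)}), the coordinate derivatives are still
\[
\varphi_{ix}=\alpha'_i+y\beta'_i+z\gamma'_i,\qquad \varphi_{iy}=\beta_i,\qquad \varphi_{iz}=\gamma_i,
\]
so only the metric data need to be updated.

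The first step is to plug the inverse matrix (\ref{eq-(4.5)}) and the determinant (\ref{eq-(4.6)}) into the general divergence formula (\ref{eq-3.1}). The hyperbolic condition $\langle\beta,\beta\rangle=\langle\gamma,\gamma\rangle=-1$ flips the diagonal entries $g_{22},g_{33}$ of $[g_{ij}]$ to $-1$, which in turn changes every $a-b^2$ and $a-c^2$ of the type-1 inverse into $-a-b^2$ and $-a-c^2$, and every off-diagonal $ce-b$, $be-c$ into $ce+b$, $be+c$. Imposing the orthogonality hypothesis $\beta\perp\gamma$, i.e.\ $e=-\beta_1\gamma_1+\sum_{i=2}^{4}\beta_i\gamma_i=0$, then collapses the off-diagonal entries of $[g^{ij}]$ to $b$ and $c$, the entry $bc-ae$ to $bc$, and simplifies the determinant to $D=a+b^2+c^2=Q$, which matches the $Q$ appearing in the statement.

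With these simplifications, each of the three $\partial/\partial x^k$ terms in (\ref{eq-3.1}) becomes the derivative of a rational expression with common denominator $\sqrt{Q}$ and numerator affine in $\varphi_{ix},\beta_i,\gamma_i$. The final step is to apply the quotient rule in each variable. The key observation is that the quantities $P_1=a_x+2bb_x+2cc_x$, $P_2=a_y+2bb_y+2cc_y$, $P_3=a_z+2bb_z+2cc_z$ are precisely the partial derivatives of $Q$ in the $x$, $y$, $z$ directions, and these are exactly what emerges when one differentiates the $1/\sqrt{Q}$ factor in each summand. Collecting numerators and factoring out $1/Q^{3/2}$ yields the three bracketed fractions of (\ref{eq-3.5}).

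The main obstacle is purely bookkeeping: one must carefully track the sign flips inherited from the hyperbolic condition through the cofactor entries of $[g^{ij}]$ and verify that the off-diagonal entries carry $+b,+c$ in the type-2 case, so that the three numerators inside (\ref{eq-3.5}) carry $+b\beta_i+c\gamma_i$, $+b\varphi_{ix}+bc\gamma_i+(-a-c^2)\beta_i$, $+c\varphi_{ix}+bc\beta_i+(-a-b^2)\gamma_i$ instead of the corresponding $-$'s in the type-1 analogue (\ref{eq-3.5(2)}). Once these signs are recorded correctly, the computation is mechanical and proceeds verbatim as in the type-1 proof.
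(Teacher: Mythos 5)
Your proposal follows exactly the route the paper intends: no explicit proof is given for the type-2 case, but the type-1 derivation (\ref{eq-3.1})--(\ref{eq-3.5(2)}) is clearly the template, and you correctly track how $\langle\beta,\beta\rangle=\langle\gamma,\gamma\rangle=-1$ flips the entries of $[g^{ij}]$ in (\ref{eq-(4.5)}) and how $e=0$ collapses $\det[g_{ij}]$ to $Q=a+b^2+c^2$ before applying the divergence formula. One caveat on your final step: since $\partial_{x^k}\bigl(Q^{-1/2}\bigr)=-\tfrac12 Q^{-3/2}P_k$, a literal quotient-rule computation produces $\tfrac12 P_k$ where the displayed formula has $P_k$, so your claim that $P_k$ is ``exactly what emerges'' glosses over a factor of $\tfrac12$ --- a discrepancy already present in the paper's own type-1 formula (\ref{eq-3.5(2)}), hence a defect of the stated result rather than of your method.
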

\begin{example}\label{E1}
Let $\varphi$ be the 2-ruled hypersurface of type-2 defined by
\begin{eqnarray*}
\varphi(x,y,z)=\Big(\frac{x^4}{4}-\frac{2y}{\sqrt{3}}+\sqrt{2}, 2x+1+\frac{z}{\sqrt{7}}, -3x+\frac{y}{\sqrt{3}}, \frac{x^3}{3}+\frac{z\sqrt{6}}{\sqrt{7}}\Big).
\end{eqnarray*}
An easy computation show that $\varphi$ is minimal. And the Laplace-Beltrami operator of $\varphi$ is zero.
\end{example}

\section{2-ruled hypersurfaces constructed by particular octonions}
Now we give the definition of the 2-ruled hypersurface constructed by the particular octonion.
\begin{definition}
Let $\tilde{\gamma}=a(t)+\varepsilon a^*(t)$ and $\tilde{\beta}=b(t)+\varepsilon b^*(t)$ be two curves
on the unit dual sphere $\mathbb{S}^3_\mathbb{D}$, the 2-ruled hypersurfaces corresponding to these
curves is
\begin{eqnarray}
\varphi(r,s,t)=\alpha(t)+sa(t)+rb(t),
\end{eqnarray}
where $\alpha(t)=a(t)\times a^*(t)\times I+b(t)\times b^*(t)\times I$.
\end{definition}
Let $u(t)$ be a curve in $\mathbb{R}^4$. We can define two particular octonions
\begin{eqnarray*}
Q(s,t)=s+u(t), \,\,\,\,\,P(r,t)=r+u(t),
\end{eqnarray*}
where $S(Q(s,t))=s$, $S(P(s,t))=r$ and $V(Q(s,t))=V(P(s,t))=u(t)$.
\begin{theorem}
Let $v(t)$ and $w(t)$ be two curve on unit sphere in $\mathbb{S}^3_\mathbb{D}$ and let their position vectors
be perpendicular to the position vector of the curve $u(t)$ (i.e $\vert v(t)\vert=\vert w(t)\vert=1$ and $\langle v(t), u(t)\rangle=\langle w(t), u(t)\rangle=0$. Then the sum defined by
\begin{eqnarray}
\varphi(s,r,t)=\alpha(t)+sw(t)+rv(t),
\end{eqnarray}
where $\alpha(t)=u(t)\times v(t)\times I+u(t)\times w(t)\times I$,
is a 2-ruled hypersurface constructed by the two particular octonions.
\end{theorem}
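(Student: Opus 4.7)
The plan is to show that $\varphi(s,r,t)$ arises as a sum of two particular octonion products built from $Q(s,t)=s+u(t)$ and $P(r,t)=r+u(t)$, and simultaneously fits the dual-sphere construction of the preceding definition. The first step is to introduce the two pure particular octonions $W(t)=0+w(t)$ and $V(t)=0+v(t)$, which are legitimate because $|w|=|v|=1$ already places $w,v$ inside the vector part of the particular octonion algebra with no scalar component.

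Next I would plug these into the particular octonion product formula (equation~(11) of the preliminaries):
\begin{equation*}
Q\star W\star I = S(Q)S(W)-\langle V(Q),V(W)\rangle+S(Q)V(W)+S(W)V(Q)+V(Q)\times V(W)\times I.
\end{equation*}
Since $S(W)=0$ and $\langle u,w\rangle=0$ by hypothesis, this collapses to $sw(t)+u(t)\times w(t)\times I$. The symmetric computation, using $\langle u,v\rangle=0$, gives $P\star V\star I = rv(t)+u(t)\times v(t)\times I$. Adding the two products yields
\begin{equation*}
Q\star W\star I + P\star V\star I = \bigl(u\times v\times I + u\times w\times I\bigr) + sw(t)+rv(t) = \alpha(t)+sw(t)+rv(t),
\end{equation*}
which is exactly $\varphi(s,r,t)$. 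This identifies $\varphi$ as a surface \emph{constructed by} the two particular octonions $Q$ and $P$.

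Finally, to confirm that $\varphi$ is a 2-ruled hypersurface in the sense of the preceding Definition, I would set $\tilde{\gamma}(t)=w(t)+\varepsilon u(t)$ and $\tilde{\beta}(t)=v(t)+\varepsilon u(t)$. The norm formula gives $N_{\tilde{\gamma}}=|w|^{2}+2\varepsilon\langle w,u\rangle=1$ and similarly $N_{\tilde{\beta}}=1$, so both curves lie on $\mathbb{S}^{3}_{\mathbb{D}}$. With $a(t)=w(t)$, $a^{*}(t)=u(t)$, $b(t)=v(t)$, $b^{*}(t)=u(t)$, the definition's base curve $a\times a^{*}\times I+b\times b^{*}\times I$ coincides with $\alpha(t)$ (the $4$-dimensional determinantal cross product being trilinear, one permutes the two arguments to match the stated ordering in $\alpha$), so $\varphi(s,r,t)=\alpha(t)+sa(t)+rb(t)$ matches the Definition verbatim.

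The main technical point is ensuring that the two descriptions reconcile on the nose: the octonion product naturally produces $u\times v\times I$ in one order while the Definition writes $a\times a^{*}\times I$ in a possibly different order, and the $4$-dimensional cross product is antisymmetric under row swaps. Thus the only real work is the bookkeeping of signs and factor orderings in the determinant defining $\times\times$; once the pairing $(a,a^{*})=(w,u)$, $(b,b^{*})=(v,u)$ is chosen consistently, the two expressions for $\alpha(t)$ agree, and no further calculation is required.
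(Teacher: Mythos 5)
Your proposal is correct and follows essentially the same route as the paper: form the two products $Q\star w\star I$ and $P\star v\star I$, use $\langle u,w\rangle=\langle u,v\rangle=0$ to kill the scalar parts, and add. (You are in fact slightly more careful than the paper, which writes $Q(s,t)\star v(t)\star I$ where it means $P(r,t)\star v(t)\star I$; your extra check that $\varphi$ matches the dual-sphere Definition is a bonus the paper omits.)
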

\begin{proof}
Since $S(Q(s,t))=s$, $S(P(s,t))=r$ and $V(Q(s,t))=V(P(s,t))=u(t)$, using the octonion product operator we have
\begin{eqnarray}\label{p1}
Q(s,t)\star w(t)\star I&=&(s + u(t))\star w(t)\star I\nonumber\\
&=&-\langle u(t),w(t)\rangle + sw(t) + u(t)\times w(t)\times I\nonumber\\
&=& sw(t) + u(t)\times w(t)\times I,
\end{eqnarray}
and the same calculus give
\begin{eqnarray}\label{p2}
Q(s,t)\star v(t)\star I&=&(s + u(t))\star v(t)\star I\nonumber\\
&=&rv(t) + u(t)\times v(t)\times I.
\end{eqnarray}
If we put (\ref{p1})+(\ref{p2}), we get
\begin{eqnarray*}
\varphi(s,r,t)&=&u(t)\times v(t)\times I+u(t)\times w(t)\times I+sw(t)+rv(t)\\
&=&
\alpha(t)+sw(t)+rv(t),
\end{eqnarray*}
where $\alpha(t)=u(t)\times v(t)\times I+u(t)\times w(t)\times I$.
\end{proof}
\begin{corollary}
Let $\tilde{\gamma}_1=a+\varepsilon a^*$ and $\tilde{\gamma}_2=b+\varepsilon b^*$ be dual number in $\mathbb{S}^3_\mathbb{D}$. Then, the particular octonion $\varphi(s,r,t)$ can be written as follows
\begin{eqnarray}
\varphi(s,r,t)=\alpha(t)+sa(t)+rb(t),
\end{eqnarray}
where $\alpha(t)=a(t)\times {a}^{\ast}(t)\times I+b(t)\times {b}^{\ast}(t)\times I$,
is a 2-ruled hypersurface constructed by the two particular octonions.
\end{corollary}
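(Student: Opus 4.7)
The plan is to follow the same octonion-product strategy used in the preceding theorem, but to run it twice with two independent particular octonions, one for each director, so that the two different base curves $a^{\ast}(t)$ and $b^{\ast}(t)$ each contribute one cross-product term to $\alpha(t)$.

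First I would unpack the hypothesis $\tilde{\gamma}_1,\tilde{\gamma}_2\in\mathbb{S}^3_\mathbb{D}$ by applying the dual-norm formula recorded in the Preliminaries: $N_{\tilde{\gamma}_1}=\langle a,a\rangle+2\varepsilon\langle a,a^{\ast}\rangle=1$ in $\mathbb{D}$ forces $\langle a,a\rangle=1$ and $\langle a,a^{\ast}\rangle=0$, and analogously $\langle b,b\rangle=1$, $\langle b,b^{\ast}\rangle=0$. Hence $a(t),b(t)$ trace curves on $S^3_1$ and $a^{\ast}(t),b^{\ast}(t)$ are pointwise Lorentz-orthogonal to $a(t),b(t)$ respectively. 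These are exactly the orthogonality relations needed to kill the scalar part in the particular octonion product.

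Next I would introduce two particular octonions
\[
Q_1(s,t)=s+a^{\ast}(t),\qquad Q_2(r,t)=r+b^{\ast}(t),
\]
with $S(Q_1)=s$, $V(Q_1)=a^{\ast}$, $S(Q_2)=r$, $V(Q_2)=b^{\ast}$, and multiply them by $a(t)$ and $b(t)$ (viewed as pure particular octonions) via the product formula stated in the Preliminaries. The formula simplifies drastically: the $S(a)S(Q_1)$ and $S(a)V(Q_1)$ terms vanish because $a$ has zero scalar part, while the inner-product term $-\langle a,a^{\ast}\rangle$ vanishes by the dual-sphere condition; an identical collapse happens for the second product. What remains is
\[
a\star Q_1\star I=s\,a+a\times a^{\ast}\times I,\qquad b\star Q_2\star I=r\,b+b\times b^{\ast}\times I.
\]
Adding these two identities yields exactly $\alpha(t)+s\,a(t)+r\,b(t)=\varphi(s,r,t)$ with the claimed $\alpha(t)=a\times a^{\ast}\times I+b\times b^{\ast}\times I$. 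Since $a(t),b(t)\in S^3_1$, the resulting map matches the definition of a 2-ruled hypersurface of type-1, and by construction it is realised as a sum of two particular octonion products, which is the content of the corollary.

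The only subtlety I expect is tracking the order of factors in the non-commutative octonion product: computing $Q_1\star a\star I$ instead of $a\star Q_1\star I$ would flip the sign of the cross-product piece and produce $-a\times a^{\ast}\times I$ rather than the required $+a\times a^{\ast}\times I$. Apart from this sign-checking, the argument is a direct algebraic verification that combines the dual-norm identity with the particular octonion product formula, and no deeper obstacle appears.
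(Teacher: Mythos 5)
Your proposal is correct and follows essentially the same route as the paper's own proof: it introduces the two particular octonions $s+a^{\ast}(t)$ and $r+b^{\ast}(t)$, multiplies each on the left by the corresponding pure octonion $a(t)$ or $b(t)$, kills the scalar term via the unit dual sphere condition $\langle a,a^{\ast}\rangle=\langle b,b^{\ast}\rangle=0$, and adds the two products. Your explicit derivation of the orthogonality relations from the dual-norm formula, and your remark on the factor order in the non-commutative product, are slightly more careful than the paper's presentation but do not change the argument.
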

\begin{proof}
Let $\tilde{\gamma}_1=a+\varepsilon a^*$ and $\tilde{\gamma}_2=b+\varepsilon b^*$ be dual number in $\mathbb{S}^3_\mathbb{D}$. We know that 
$$Q(s,t)=s+a^*(t)$$
and 
$$P(r,t)=r+b^*(t)$$
are two particular octonions where $S(Q(s,t))=s$, $S(P(r,t))=r$,  $V(Q(s,t))= a^*(t)$ and $V(P(r,t))=b^*(t)$. So using the octonion product we have 
\begin{eqnarray*}
a(t)\star Q(s,t)\star I&=&a(t)(s+a^*(t))\nonumber\\
&=&-\langle a(t), a^*(t)\rangle+sa(t)+a(t)\times a^*(t)\times I.
\end{eqnarray*}
Since $\vert\tilde{\gamma}_1\vert=1$ we have $\langle a(t), a^*(t)\rangle=0$.
Then 
\begin{eqnarray}
a(t)\star Q(s,t)\star I=a(t)\times a^*(t)\times I+sa(t).\label{p3}
\end{eqnarray}
The same calculus gives also
\begin{eqnarray}
b(t)\star P(r,t)\star I=b(t)\times b^*(t)\times I+rb(t).\label{p4}
\end{eqnarray}
If we take (\ref{p3})+(\ref{p4}) and denote by $\varphi(s,r,t)=a(t)\star Q(s,t)\star I+b(t)\star P(r,t)\star I$, we get
\begin{eqnarray*}
\varphi(s,r,t)=\alpha(t)+sa(t)+rb(t).
\end{eqnarray*}
\end{proof}
\begin{example}\label{Ex3}
Let us take the particular octonions $Q(s,t)=s+u(t)$ and  $P(r,t)=r+u(t)$ defined by $u(t)=(-\cos t\cos 2 t,\cos t\sin 2 t, 0, 0)\in \mathbb{R}^{4}_1$. Then, we can find 
\begin{align*}
    w(t)=(\sin t\sin 2 t,\sin t\cos 2 t, \cos t, \sin t)\textit{ and }v(t)=(\cos t\sin 2 t,\sin t\sin 2 t, \sin t, -\cos t).
\end{align*}
 Thus, we can compute
 \begin{align*}
     \alpha(t)&=u(t)\times w(t)\times I+=u(t)\times v(t)\times I\\
     &=(0,0,\sin 2 t(\frac{1}{2}\sin 2t-\cos^{2} t),\sin 2 t(\frac{1}{2}\sin 2t+\cos^{2} t)).
 \end{align*}
Then, we reach the following 2-ruled hypersurface of type-1, 
\begin{displaymath}
 \varphi(s,t,r)=\left(
\begin{array}{c}
s \sin t\sin 2t+r\cos t\sin 2 t \\
s \sin t\cos 2t+r\sin t\sin 2 t  \\
\sin 2 t(\frac{1}{2}\sin 2t-\cos^{2} t)+s\cos t+r\sin t\\
\sin 2 t(\frac{1}{2}\sin 2t+\cos^{2} t)+s\sin t+r\cos t
\end{array}\right).
\end{displaymath}
\end{example}
Next, the image of the projections of 2-ruled hypersurface of type-1 in Example \ref{Ex3} onto  $\mathbb{R}_{1}^{3}$ constructed by particular octonion are visualized in Figure \ref{F1}. 
\begin{figure}[H]
\begin{center}
\subfigure[]{\includegraphics[width=3.3in]{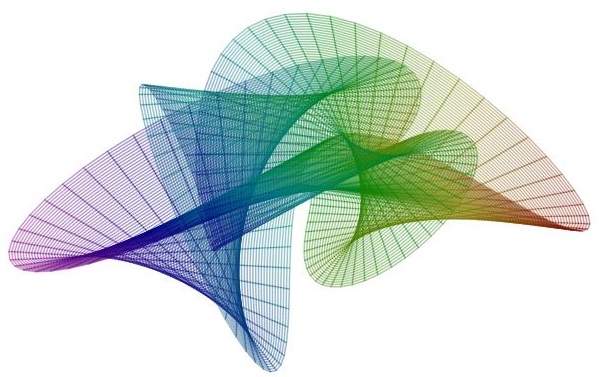}}
\subfigure[]{\includegraphics[width=3.1in]{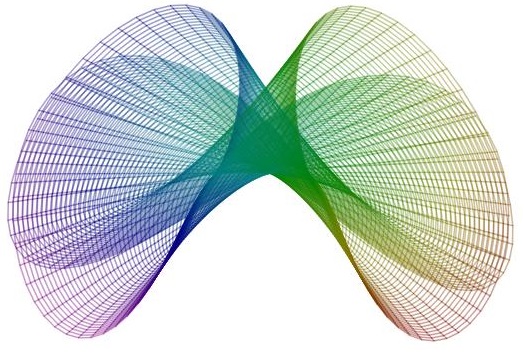}}
\subfigure[]{\includegraphics[width=3.3in]{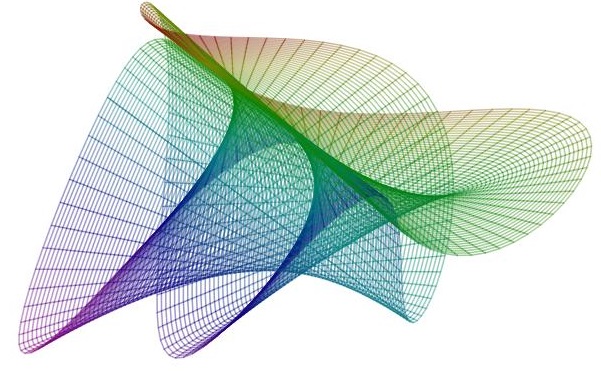}}
\subfigure[]{\includegraphics[width=3.1in]{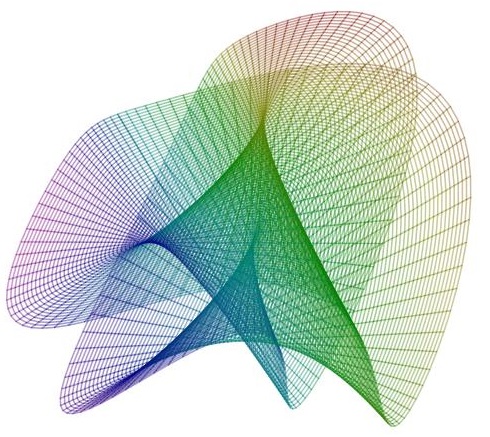}}
\end{center}
\caption{Some projections of 2-ruled hypersurface of type-1 constructed by particular octonion in $\mathbb{R}_{1}^{4}$}
\label{F1}
\end{figure}

\section{Some discussions related to the electromagnetic theory}
By identifying an optical fiber with a curve, we can give a geometric interpretation of the motion of a linearly polarized light wave through Frenet roof elements. As the linearly polarized light wave moves along the optical fiber, the Polarization plane rotates, and the image of the polarization vector (electric field) in the plane is a linear line.\\
Therefore, we can use ruled surfaces to model this movement geometrically. In particular, it would be very advantageous to use ruled surface equations instead of standard calculations when expressing the motion of a linearly polarized light wave along the optical fiber in 4 dimensions.\\
In this study, we defined three types of 2-ruled hypersurfaces in 4-dimensional Minkowski space $\mathbb{R}_{1}^{4}$. In this section we will give an interpretation of the motion of the polarized light wave in the 4-dimensional Minkowski space of these surfaces and give some motivated examples and visualize them through MAPLE program.\\
We demonstrate that the evolution of a linearly polarized light wave is associated with the movement of the parameter curve, which is the line segment in the formation of the ruled surface. If we match the parameter curve, which is the line segment of the ruled surface, with the polarization vector, the optical fiber as the other parameter curve is matched. Hence, the polarization vector moves in parallel along an optical fiber. This allows us to interpret the movement of the polarization vector (electric field) along an optical fiber geometrically in 4-dimensional space. 
\section{Conclusions}
In this paper, we gave the definition of three types of 2-ruled hypersurfaces and we calculated the mean curvature, the Gauss curvature and the Laplace-Bertrami operator of the two types of 2-ruled hypersurfaces. After, we constructed those 2-ruled hypersurfaces by using the particular octonion. In this construction, we gave an example and we visualized the images with MAPLE program. This construction is new and original. Then, we presented some discussions related to the 2-ruled hypersurfaces and the electromagnetic theory. For perspective, one can do the same in also Riemannian 4-manifolds and pseudo-Riemannian 4-manifolds.

\end{document}